\newenvironment{resetCounters}
{
\setcounter{section}{0}
\setcounter{subsection}{0}
\setcounter{subsubsection}{0}
\setcounter{paragraph}{0}
\setcounter{subparagraph}{0}
\setcounter{equation}{0}
\setcounter{figure}{0}
\setcounter{table}{0}
\setcounter{footnote}{0}
\setcounter{theorem}{0}
\setcounter{lemma}{0}
\setcounter{remark}{0}
\setcounter{corollary}{0}
\setcounter{definition}{0}
\setcounter{example}{0}
}{}
\newtheorem{theorem}{Theorem}
\newtheorem{definition}{Definition}
\begin{document}

\makeatletter
\renewcommand{\@oddhead}{\hbox to 151.5mm{\hrulefill\raisebox{2.2mm}{\underline{\strut{\small\slshape }}}}}
\renewcommand{\@evenhead}{\hbox to 151.5mm{\raisebox{2.2mm}{\underline{\strut{\small\slshape Volume X, Issue Y (201Z)}}}\hrulefill}}
\makeatother

\setcounter{page}{0}

\begin{center}
{\Large\bf Behavior and dynamics of the set of absolute nilpotent and idempotent
elements of chain of evolution algebras depending on the time}
\end{center}

\begin{center}
{\sc Anvar Imomkulov}\\
{\it  V.I.Romanovskiy Institute of Mathematics, Tashkent, Uzbekistan\\
}
e-mail: {\tt a.imomkulov@mathnet.uz}
\end{center}

\begin{abstract}
In this paper we construct some families of three-dimensional evolution algebras
which satisfies Chapman-Kolmogorov equation. For all of
these chains we study the behavior of the baric property, the behavior
of the set of absolute nilpotent elements and dynamics of the set of
idempotent elements depending on the time.

\emph{\textbf{Keywords:}Evolution algebras, Chapman-Kolmogorv equation, baric algebra,
property transition, idempotent, nilpotent.}

\emph{\textbf{Mathematics Subject Classification (2010):}13J30, 13M05}
\end{abstract}

\section*{Introduction}

Mathematical methods have long been successfully used in population genetics. Research from
mathematics to population genetics is based on Mendel's laws (Gregor Johann Mendel, 1822-1884),
where he used symbols that, from an algebraic point of view, suggest the expression of his genetic laws.
Between 1856 and 1863, Mendel studied pea hybridization and gave a fundamental concept of classical genetics,
a gene called "constant character", to explain the observed inheritance statistics. In 1866, Gregor Mendel
published the results of many years of experiments on the selection of pea plants \cite{Mendel}. He showed
that both parents must transmit discrete physical factors that convey information about their characteristics
to their offspring at conception. Mendel was the first to use symbols that, from an algebraic point of view,
express their genetic laws. Thus, mathematicians and geneticists once used non-associative algebras to study
Mendelian genetics, and later some other authors called it "Mendelian algebras."

A notion of evolution algebra is introduced by J.P.Tian in \cite{Tian}. This evolution algebra
is defined as follows. Let $(E,\cdot)$ be an algebra over a field $\mathbb K$. If it admits a
countable basis $e_{1},e_{2},\ldots,$ such that $e_{i}\cdot e_{j}=0,$ if $i\neq j$ and
$e_{i}\cdot e_{i}=\sum_{k}a_{ik}e_{k},$ for any $i$, then this algebra is called an evolution algebra.
The concept of evolution algebras lies between algebras and dynamical
systems. Algebraically, evolution algebras are non-associative Banach algebra;
dynamically, they represent discrete dynamical systems. Evolution
algebras have many connections with other mathematical fields including
graph theory, group theory, stochastic processes, mathematical physics, etc.

The Kolmogorov-Chapman equation gives the fundamental relationship
between the probability transitions (kernels). Namely, it is known that
(see e.g. \cite{SK}) if each element of a family of matrices satisfying the
Kolmogorov-Chapman equation is stochastic, then it generates a Markov process.

There are many random processes which can not be described by Markov processes
of square stochastic matrices (see for example \cite{Dmitriev,HT,Kampen,Maksimov}).

To have non-Markov process one can consider a solution of the Kolmogorov-Chapman
equation which is not stochastic for some time as in \cite{CLR,ORT,RMchicken}, where a chain
of evolution algebras (CEA) is introduced and investigated. Later, this notion
of CEA was generalized in \cite{LRflow}, where a concept of flow of arbitrary finite-dimensional
algebras (i.e. their matrices of structural constants are cubicmatrices) is introduced.
In \cite{CLR19} considered Markov processes of cubic stochastic (in a fixed sense) matrices
which are also called quadratic stochastic pro-cess (QSPs)

In the book \cite{Tian}, the foundation of evolution algebra theory and applications
in non-Mendelian genetics and Markov chains are developed.
In \cite{RT} the algebraic structures of function spaces defined by graphs and
state spaces equipped with Gibbs measures by associating evolution algebras
are studied. Results of \cite{RT} also allow a natural introduction of thermodynamics
in studying of several systems of biology, physics and mathematics
by theory of evolution algebras.
There exist several classes of non-associative algebras (baric, evolution,
Bernstein, train, stochastic, etc.), whose investigation has provided a number
of significant contributions to theoretical population genetics. In \cite{Imomkulov}
classified a family of three-dimensional evolution algebras, and in \cite{IR} and \cite{Iuzbmath} considered
a notion of approximation of an algebra by evolution algebras.
In \cite{CLR} a notion of a chain of evolution algebras is introduced.
This chain is a dynamical system the state of which at each given time is
an evolution algebra. The chain is defined by the sequence of matrices of
the structural constants (of the evolution algebras considered in \cite{Tian}) which
satisfies the Chapman-Kolmogorov equation. In \cite{RM} studied
chains generated by two-dimensional evolution algebras.

In this paper we continue investigation of chain of evolution algebras,
in more detail we study chains generated by three-dimensional evolution algebras.

\section{Preliminaries}
Following \cite{CLR} we consider a family $\{E^{[s,t]}: s,t\in\mathbb{R}, 0\leq s\leq t\}$ of
$n$-dimensional evolution algebras over the field $\mathbb R$, with basis $e_{1},e_{2},\dots,e_{n}$,
and the multiplication table $$e_{i}e_{i}=\sum_{j=1}^{n}a_{i,j}^{[s,t]}e_{j},
\,\,\,\,\,\,i=1,\dots,n; \,\,\,\,\,e_{i}e_{j}=0, \,\,\,i\neq j.$$
Here parameters $s,t$ are considered as time.

Denote by $\mathcal M_{[s,t]}=\left(a_{i,j}^{[s,t]}\right)_{i,j=1,\dots n}$ the matrix of structural constants.

\begin{definition}
A family $\{E^{[s,t]}: s,t\in\mathbb{R}, 0\leq s\leq t\}$ of
$n$-dimensional evolution algebras over the field $\mathbb R$,
is called a {\it chain of evolution algebras} CEA if the matrix $\mathcal M_{[s,t]}$
of structural constants satisfies the Chapman-Kolmogorov equation
\begin{equation}\label{Chapman-Kolmogorov}
\mathcal M_{[s,t]}=\mathcal M_{[s,\tau]}\mathcal M_{[\tau,t]}, \,\,\,for \,\,\,any \,\,\,s<\tau<t.
\end{equation}
\end{definition}

A character for an algebra $A$ is a nonzero multiplicative linear form on $A$,
that is, a nonzero algebra homomorphism from $A$ to $\mathbb{R}$ \cite{Lyubich}. Not every algebra
admits a character. For example, an algebra with the zero multiplication
has no character.

\begin{definition} A pair $(A,\sigma)$ consisting of an algebra $A$ and a character
$\sigma$ on $A$ is called a baric algebra. The homomorphism $\sigma$ is called the weight
(or baric) function of $A$ and $\sigma(x)$ the weight (baric value) of $x$.
\end{definition}

In \cite{Lyubich} for the evolution algebra of a free population it is proven that there is
a character $\sigma(x)=\sum_{i}x_{i}$, therefore that algebra is baric. But the evolution
algebra $E$ introduced in \cite{Tian} is not baric, in general. The following theorem
gives a criterion for an evolution algebra $E$ to be baric.

\begin{theorem}\label{barikliksharti}
An $n$-dimensional evolution algebra $E$, over the field $\mathbb{R}$, is baric if and only if
there is a column $(a_{1i_{0}},\ldots,a_{ni_{0}})^T$ of its structural
constants matrix $\mathcal{M}=(a_{ij})_{i,j=1,\ldots,n}$, such that $a_{i_{0}i_{0}}\neq0$ and
$a_{ii_{0}}=0,$ for all $i\neq i_{0}.$ Moreover, the corresponding weight function is $\sigma(x)=a_{i_{0}i_{0}}x_{i_{0}}.$
\end{theorem}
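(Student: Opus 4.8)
The plan is to translate the requirement that $\sigma$ be a character into explicit conditions on its values on the basis, and then simply read off the stated column condition. Writing $\sigma_i := \sigma(e_i)$, linearity gives $\sigma(x) = \sum_{i=1}^n \sigma_i x_i$ for $x = \sum_i x_i e_i$, so a character is completely determined by the tuple $(\sigma_1,\ldots,\sigma_n)$, which must be nonzero since a character is by definition a nonzero form. The multiplicativity requirement $\sigma(e_i e_j) = \sigma(e_i)\sigma(e_j)$ then splits into two families of scalar equations according to whether $i \neq j$ or $i = j$, and the whole proof amounts to analysing these two families.

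First I would exploit the off-diagonal relations. For $i \neq j$ we have $e_i e_j = 0$, so multiplicativity forces $\sigma_i\sigma_j = 0$ whenever $i \neq j$. Hence at most one of the numbers $\sigma_1,\ldots,\sigma_n$ can be nonzero; since $\sigma$ is a character it cannot be identically zero, so exactly one coordinate, say $\sigma_{i_0}$, is nonzero and all the others vanish. This is the conceptual crux of the argument: the ``orthogonality'' $e_ie_j=0$ of the basis in an evolution algebra collapses any character to a single coordinate, immediately forcing the shape $\sigma(x) = \sigma_{i_0} x_{i_0}$.

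Next I would feed this back into the diagonal relations. From $e_i e_i = \sum_{j=1}^n a_{ij} e_j$ multiplicativity gives $\sum_{j=1}^n a_{ij}\sigma_j = \sigma_i^2$ for every $i$, and since $\sigma_j = 0$ for $j \neq i_0$ the left-hand side reduces to $a_{i i_0}\sigma_{i_0}$. Taking $i = i_0$ and cancelling the nonzero factor $\sigma_{i_0}$ yields $a_{i_0 i_0} = \sigma_{i_0} \neq 0$; taking $i \neq i_0$ (so that $\sigma_i = 0$) yields $a_{i i_0}\sigma_{i_0} = 0$, hence $a_{i i_0} = 0$. This produces precisely the column condition in the statement and simultaneously identifies $\sigma(x) = \sigma_{i_0} x_{i_0} = a_{i_0 i_0} x_{i_0}$.

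For the converse I would run the computation backwards: given a column $i_0$ with $a_{i_0 i_0}\neq 0$ and $a_{i i_0} = 0$ for all $i \neq i_0$, set $\sigma(x) = a_{i_0 i_0} x_{i_0}$ and verify it is a character by checking the same two families of identities, which now hold by construction. I do not anticipate a genuine obstacle in this direction, the verification being routine; the one point requiring care throughout is the cancellation of $\sigma_{i_0}$, which is legitimate exactly because a character is required to be nonzero, and it is this nonvanishing that both guarantees a distinguished index $i_0$ in the forward direction and makes the stated condition $a_{i_0 i_0}\neq 0$ both necessary and sufficient.
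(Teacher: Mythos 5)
Your proof is correct and complete: the off-diagonal relations $e_ie_j=0$ force all but one coordinate $\sigma_{i_0}$ of any character to vanish, the diagonal relations then give $a_{i_0i_0}=\sigma_{i_0}\neq 0$ and $a_{ii_0}=0$ for $i\neq i_0$, and the converse verification with $\sigma(x)=a_{i_0i_0}x_{i_0}$ is indeed routine. Note that the paper itself states this theorem without any proof --- it is imported as a known criterion from \cite{CLR} --- so there is no in-paper argument to compare against; your argument is the standard proof of this criterion, essentially the one given in that cited reference.
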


In \cite{CLR} several concrete examples of chains of evolution algebras are
given and time-dynamics are studied. We continue the research of CEAs, in more detail,
we study the CEAs generated by three-dimensional evolution algebras.

To construct a chain of three-dimensional evolution algebras one has to solve
equation (\ref{Chapman-Kolmogorov}) for the $3\times3$ matrix $\mathcal M_{[s,t]}$.
This equation gives the following system of functional equations (with nine unknown functions):
\begin{equation}\label{equation}\begin{array}{c}
a_{11}^{[s,t]}=a_{11}^{[s,\tau]}a_{11}^{[\tau,t]}+a_{12}^{[s,\tau]}a_{21}^{[\tau,t]}+a_{13}^{[s,\tau]}a_{31}^{[\tau,t]},\\[2mm]
a_{12}^{[s,t]}=a_{11}^{[s,\tau]}a_{12}^{[\tau,t]}+a_{12}^{[s,\tau]}a_{22}^{[\tau,t]}+a_{13}^{[s,\tau]}a_{32}^{[\tau,t]},\\[2mm]
a_{13}^{[s,t]}=a_{11}^{[s,\tau]}a_{13}^{[\tau,t]}+a_{12}^{[s,\tau]}a_{23}^{[\tau,t]}+a_{13}^{[s,\tau]}a_{33}^{[\tau,t]},\\[2mm]
a_{21}^{[s,t]}=a_{21}^{[s,\tau]}a_{11}^{[\tau,t]}+a_{22}^{[s,\tau]}a_{21}^{[\tau,t]}+a_{23}^{[s,\tau]}a_{31}^{[\tau,t]},\\[2mm]
a_{22}^{[s,t]}=a_{21}^{[s,\tau]}a_{12}^{[\tau,t]}+a_{22}^{[s,\tau]}a_{22}^{[\tau,t]}+a_{23}^{[s,\tau]}a_{32}^{[\tau,t]},\\[2mm]
a_{23}^{[s,t]}=a_{21}^{[s,\tau]}a_{13}^{[\tau,t]}+a_{22}^{[s,\tau]}a_{23}^{[\tau,t]}+a_{23}^{[s,\tau]}a_{33}^{[\tau,t]},\\[2mm]
a_{31}^{[s,t]}=a_{31}^{[s,\tau]}a_{11}^{[\tau,t]}+a_{32}^{[s,\tau]}a_{21}^{[\tau,t]}+a_{33}^{[s,\tau]}a_{31}^{[\tau,t]},\\[2mm]
a_{32}^{[s,t]}=a_{31}^{[s,\tau]}a_{12}^{[\tau,t]}+a_{32}^{[s,\tau]}a_{22}^{[\tau,t]}+a_{33}^{[s,\tau]}a_{32}^{[\tau,t]},\\[2mm]
a_{33}^{[s,t]}=a_{31}^{[s,\tau]}a_{13}^{[\tau,t]}+a_{32}^{[s,\tau]}a_{23}^{[\tau,t]}+a_{33}^{[s,\tau]}a_{33}^{[\tau,t]}.
\end{array}
\end{equation}
But analysis of the system (\ref{equation}) is difficult. In \cite{ORT} it was solved when
matrix $\mathcal M_{[s,t]}$ has upper-triangular view. We will consider several cases where
the system is solvable. For this solutions corresponds some evolution algebras.
And we will check which of these algebras are isomorphic to 3-dimensional
classified algebras.

\section{Construction of chains of three-dimensional evolution algebras}
In this section we find solves of system of equations (\ref{equation}) in particular cases.

\subsection{Three-dimensional CEAs corresponding to matrices with same columns}
Let $a_{11}^{[s,t]}=a_{12}^{[s,t]}=a_{13}^{[s,t]}=\alpha(s,t)$,
$a_{21}^{[s,t]}=a_{22}^{[s,t]}=a_{23}^{[s,t]}=\beta(s,t)$,
$a_{31}^{[s,t]}=a_{32}^{[s,t]}=a_{33}^{[s,t]}=\gamma(s,t)$. Then equation (\ref{equation}) reduced to
\begin{equation}\label{equation1}\begin{array}{c}
\alpha(s,t)=\alpha(s,\tau)\left(\alpha(\tau,t)+\beta(\tau,t)+\gamma(\tau,t)\right),\\
\beta(s,t)=\beta(s,\tau)\left(\alpha(\tau,t)+\beta(\tau,t)+\gamma(\tau,t)\right),\\
\gamma(s,t)=\gamma(s,\tau)\left(\alpha(\tau,t)+\beta(\tau,t)+\gamma(\tau,t)\right).
\end{array}
\end{equation}

Denote
\begin{equation}\label{denote1}\begin{array}{c}
\delta(s,t)=\alpha(s,t)+\beta(s,t)+\gamma(s,t),\\
\zeta(s,t)=\alpha(s,t)-\beta(s,t)+\gamma(s,t),\\
\eta(s,t)=\alpha(s,t)-\beta(s,t)-\gamma(s,t).
\end{array}
\end{equation}
Then the last system of functional equations can be written as
$$\delta(s,t)=\delta(s,\tau)\delta(\tau,t),\,\,\,\,\,
\zeta(s,t)=\zeta(s,\tau)\delta(\tau,t),\,\,\,\,\,
\eta(s,t)=\eta(s,\tau)\delta(\tau,t),\,\,\,\,\, s\leq\tau\leq t. $$

The first equation is Cantor's second equation which has a very rich family of solutions:
\begin{itemize}
\item[a)] $\delta(s,t)\equiv0$;
\item[b)] $\delta(s,t)=\frac{h(t)}{h(s)}$, where $h$ is an arbitrary function with $h(s)\neq0$;
\item[c)] $\delta(s,t)=\left\{\begin{array}{l}
1,\,\,\,\,\, \text{if}\,\,\,\,\, s\leq t<a,\\
0,\,\,\,\,\, \text{if}\,\,\,\,\, t\geq a,\end{array}\right.$ where $a>0$.
\end{itemize}

Using these solutions from the second and third equations we find $\zeta$ and $\eta$:
\begin{itemize}
\item[a')] $\zeta(s,t)=\eta(s,t)=0$;
\item[b')] $\zeta(s,t)=g(s)h(t)$, $\zeta(s,t)=f(s)h(t)$ where $g$ and $f$
are an arbitrary functions;
\item[c')] $\zeta(s,t)=\left\{\begin{array}{l}
\varphi(s),\,\,\,\,\, \text{if}\,\,\,\,\, s\leq t<a,\\
0,\,\,\,\,\,\,\,\,\,\,\,\,\, \text{if}\,\,\,\,\, t\geq a,\end{array}\right.$
$\eta(s,t)=\left\{\begin{array}{l}
\psi(s),\,\,\,\,\, \text{if}\,\,\,\,\, s\leq t<a,\\
0,\,\,\,\,\,\,\,\,\,\,\,\,\, \text{if}\,\,\,\,\, t\geq a,\end{array}\right.$ where
$a>0$ and $\varphi(s)$, $\psi(s)$ are an arbitrary functions.
\end{itemize}

Substituting these solutions into (\ref{denote1}) and finding $\alpha(s,t)$, $\beta(s,t)$
and $\gamma(s,t)$ we get the following matrices
$$\mathcal M_{0}^{[s,t]}=\left(\begin{array}{cccccc}
0 & 0 & 0\\
0 & 0 & 0\\
0 & 0 & 0\end{array}\right);$$
$$\mathcal M_{1}^{[s,t]}=\frac{1}{2}\left(\begin{array}{cccccc}
h(t)\left(\frac{1}{h(s)}+f(s)\right) & h(t)\left(\frac{1}{h(s)}+f(s)\right) & h(t)\left(\frac{1}{h(s)}+f(s)\right)\\[2mm]
h(t)\left(\frac{1}{h(s)}-g(s)\right) & h(t)\left(\frac{1}{h(s)}-g(s)\right) & h(t)\left(\frac{1}{h(s)}-g(s)\right)\\[2mm]
h(t)\left(g(s)-f(s)\right) & h(t)\left(g(s)-f(s)\right) & h(t)\left(g(s)-f(s)\right)\end{array}\right);$$
$$\mathcal M_{2}^{[s,t]}=\frac{1}{2}\left\{\begin{array}{ll}\left(\begin{array}{cccccc}
1+\psi(s) & 1+\psi(s) & 1+\psi(s)\\[2mm]
1-\varphi(s) & 1-\varphi(s) & 1-\varphi(s)\\[2mm]
\varphi(s)-\psi(s) & \varphi(s)-\psi(s) & \varphi(s)-\psi(s)\end{array}\right),\,\,\,\,\, \text{if}\,\,\,\,\, s\leq t<a,\\[2mm]
\left(\begin{array}{cccccc}
0 & 0 & 0\\
0 & 0 & 0\\
0 & 0 & 0\end{array}\right),\,\,\,\,\,\,\,\,\,\,\,\,\,\,\,\,\,\,\,\,\,\,\,\,\,\,\,\,
\,\,\,\,\,\,\,\,\,\,\,\,\,\,\,\,\,\,\,\,\,\,\,\,\,\,\,\,\,\,\,\,\,\,\,\,\,\,\,\,\,\,\,
\,\,\,\,\,\,\,\,\,\,\,\,\,\,\,\, \text{if}\,\,\,\,\,t\geq a.
\end{array}\right.;$$

 Thus in this case we have three CEAs: $E_{i}^{[s,t]},\,\,0\leq s\leq t$, which correspond to the
 $\mathcal M_{i}^{[s,t]},\,\, i=0,1,2$ listed above.

\subsection{Three-dimensional CEAs with proportional rows.}
In the joint paper (accepted by journal Filomat on November) with M.V.Velasco we have described this case.
And we obtained chain as following:
$$\mathcal M_{3}^{[s,t]}=\frac{1}{\Phi(s)}\left(\begin{array}{ccc}
g_{1}(t) & g_{2}(t) & g_{3}(t)\\[2mm]
\psi(s)g_{1}(t) & \psi(s)g_{2}(t) & \psi(s)g_{3}(t)\\[2mm]
\varphi(s)g_{1}(t) & \varphi(s)g_{2}(t) & \varphi(s)g_{3}(t)\end{array}\right).$$
where, $\Phi(s)=g_{1}(s)+\psi(s)g_{2}(s)+\varphi(s)g_{3}(s)\neq0,$ and $g_1,g_2,g_3,\psi,\varphi$ are arbitrary functions.

Thus in this case we have one CEA: $E_{3}^{[s,t]}$, which correspond to the
 $\mathcal M_{3}^{[s,t]}$ listed above.

\subsection{Three-dimensional CEAs in some other cases}

\textbf{Case 1.} $a_{11}^{[s,t]}=a_{12}^{[s,t]}=a_{21}^{[s,t]}=0.$ In this case system of equations
(\ref{equation}) has the following view:
\begin{equation}\label{equation3}\begin{array}{l}
0=a_{13}^{[s,\tau]}a_{31}^{[\tau,t]},\\[2mm]
0=a_{13}^{[s,\tau]}a_{32}^{[\tau,t]},\\[2mm]
a_{13}^{[s,t]}=a_{13}^{[s,\tau]}a_{33}^{[\tau,t]},\\[2mm]
0=a_{23}^{[s,\tau]}a_{31}^{[\tau,t]},\\[2mm]
a_{22}^{[s,t]}=a_{22}^{[s,\tau]}a_{22}^{[\tau,t]}+a_{23}^{[s,\tau]}a_{32}^{[\tau,t]},\\[2mm]
a_{23}^{[s,t]}=a_{22}^{[s,\tau]}a_{23}^{[\tau,t]}+a_{23}^{[s,\tau]}a_{33}^{[\tau,t]},\\[2mm]
a_{31}^{[s,t]}=a_{33}^{[s,\tau]}a_{31}^{[\tau,t]},\\[2mm]
a_{32}^{[s,t]}=a_{32}^{[s,\tau]}a_{22}^{[\tau,t]}+a_{33}^{[s,\tau]}a_{32}^{[\tau,t]},\\[2mm]
a_{33}^{[s,t]}=a_{31}^{[s,\tau]}a_{13}^{[\tau,t]}+a_{32}^{[s,\tau]}a_{23}^{[\tau,t]}+a_{33}^{[s,\tau]}a_{33}^{[\tau,t]}.
\end{array}
\end{equation}

\textbf{Case 1.1.} $a_{13}^{[s,\tau]}=a_{23}^{[s,\tau]}=0$. Consequently $a_{13}^{[s,t]}=0$.
Then system of equation (\ref{equation3}) has the following view:
\begin{equation}\label{equation4}\begin{array}{l}
a_{22}^{[s,t]}=a_{22}^{[s,\tau]}a_{22}^{[\tau,t]},\\[2mm]
a_{23}^{[s,t]}=a_{22}^{[s,\tau]}a_{23}^{[\tau,t]},\\[2mm]
a_{31}^{[s,t]}=a_{33}^{[s,\tau]}a_{31}^{[\tau,t]},\\[2mm]
a_{32}^{[s,t]}=a_{32}^{[s,\tau]}a_{22}^{[\tau,t]}+a_{33}^{[s,\tau]}a_{32}^{[\tau,t]},\\[2mm]
a_{33}^{[s,t]}=a_{32}^{[s,\tau]}a_{23}^{[\tau,t]}+a_{33}^{[s,\tau]}a_{33}^{[\tau,t]}.
\end{array}
\end{equation}

The first equation of (\ref{equation4}) has the following solutions:
\begin{itemize}
\item[a)] $a_{22}^{[s,t]}\equiv0$;
\item[b)] $a_{22}^{[s,t]}=\frac{h(t)}{h(s)}$, where $h$ is an arbitrary function with $h(s)\neq0$;
\item[c)] $a_{22}^{[s,t]}=\left\{\begin{array}{l}
1,\,\,\,\,\, \text{if}\,\,\,\,\, s\leq t<a,\\
0,\,\,\,\,\, \text{if}\,\,\,\,\, t\geq a,\end{array}\right.$ where $a>0$.
\end{itemize}

\textbf{Case 1.1.a.} $a_{22}^{[s,t]}\equiv0.$ Consequently,  $a_{23}^{[s,t]}=0.$
Then we the following system of equations:
\begin{equation}\label{equation5}\begin{array}{c}
a_{31}^{[s,t]}=a_{33}^{[s,\tau]}a_{31}^{[\tau,t]},\\[2mm]
a_{32}^{[s,t]}=a_{33}^{[s,\tau]}a_{32}^{[\tau,t]},\\[2mm]
a_{33}^{[s,t]}=a_{33}^{[s,\tau]}a_{33}^{[\tau,t]}.
\end{array}
\end{equation}

The last equation of (\ref{equation5}) has the following solutions:

\begin{itemize}
\item[a')] $a_{33}^{[s,t]}\equiv0$;
\item[b')] $a_{33}^{[s,t]}=\frac{g(t)}{g(s)}$, where $g$ is an arbitrary function with $g(s)\neq0$;
\item[c')] $a_{33}^{[s,t]}=\left\{\begin{array}{l}
1,\,\,\,\,\, \text{if}\,\,\,\,\, s\leq t<a,\\
0,\,\,\,\,\, \text{if}\,\,\,\,\, t\geq a,\end{array}\right.$ where $a>0$.
\end{itemize}

\textbf{Case 1.1.a.a'.} $a_{33}^{[s,t]}\equiv0.$ Consequently $a_{31}^{[s,t]}=a_{32}^{[s,t]}=0.$
Thus in this case we doesn't give new algebra.

\textbf{Case 1.1.a.b'.} $a_{33}^{[s,t]}=\frac{g(t)}{g(s)}.$ By put this value to first and
second equations of (\ref{equation5}) we give that $a_{31}^{[s,t]}=\frac{\varphi(t)}{g(s)}$ and
$a_{31}^{[s,t]}=\frac{f(t)}{g(s)}$. Thus we give the following new CEA:
$$\mathcal M_{5}^{[s,t]}=\left(\begin{array}{cccccc}
0 & 0 & 0\\[2mm]
0 & 0 & 0\\[2mm]
\frac{\varphi(t)}{g(s)} & \frac{f(t)}{g(s)} & \frac{g(t)}{g(s)}\end{array}\right).$$

\textbf{Case 1.1.a.c'.} $a_{33}^{[s,t]}=\left\{\begin{array}{l}
1,\,\,\,\,\, \text{if}\,\,\,\,\, s\leq t<a,\\
0,\,\,\,\,\, \text{if}\,\,\,\,\, t\geq a,\end{array}\right.$ where $a>0.$ Then by putting this solution
to first and second equations of (\ref{equation5}) we give that $a_{31}^{[s,t]}=\left\{\begin{array}{l}
\varphi(t),\,\,\,\,\, \text{if}\,\,\,\,\, s\leq t<a,\\
0,\,\,\,\,\, \text{if}\,\,\,\,\, t\geq a\end{array}\right.$ and $a_{32}^{[s,t]}=\left\{\begin{array}{l}
\psi(t),\,\,\,\,\, \text{if}\,\,\,\,\, s\leq t<a,\\
0,\,\,\,\,\, \text{if}\,\,\,\,\, t\geq a\end{array}\right.$ Thus we give $\mathcal M_{0}^{[s,t]}$ and the following new CEA:
$$\mathcal M_{6}^{[s,t]}=\left\{\begin{array}{l}
\left(\begin{array}{cccccc}
0 & 0 & 0\\[2mm]
0 & 0 & 0\\[2mm]
\varphi(t) & \psi(t) & 1\end{array}\right),\,\,\,if\,\,\,s\leq t<a,\\[8mm]
\left(\begin{array}{cccccc}
0 & 0 & 0\\[2mm]
0 & 0 & 0\\[2mm]
0 & 0 & 0\end{array}\right),\,\,\,if\,\,\,t\geq a.\end{array}\right.$$

Cases 1.1.b, 1.1.c and 1.2, 1.3 are more difficult. So we don't include to this paper these cases.

\section{Baric property transition}
In \cite{CLR} a notion of property transition for CEAs defined.
\begin{definition}
Assume a CEA, $E^{[s,t]}$, has a property, say $P$, at pair of
times $(s_{0},t_{0});$ one says that the CEA has $P$ property transition if there is a
pair $(s,t)\neq(s_{0}, t_{0})$ at which the CEA has no the property $P$.
\end{definition}

Denote
$$\begin{array}{ccc}
\mathcal T=\{(s, t): 0\leq s\leq t\};\\[3mm]
\mathcal T_{P}=\{(s, t)\in\mathcal T: E^{[s,t]}\,\,\, \text{has property}\,\,\, P\};\\[3mm]
\mathcal T_{P}^0=\mathcal T\backslash \mathcal T_{P}=\{(s, t)\in\mathcal T: E^{[s,t]}\,\,\, \text{has no property}\,\,\, P\}.\end{array}$$

The sets have the following meaning:\\
$\mathcal T_{P}-$the duration of the property $P;$\\
$\mathcal T_{P}^0-$the lost duration of the property $P;$\\
The partition $\{\mathcal T_{P},\mathcal T_{P}^0\}$ of the set $\mathcal T$ is called $P$ property diagram.\\
For example, if $P$=commutativity then since any evolution algebra is
commutative, we conclude that any CEA has not commutativity property
transition.

Since an evolution algebra is not a baric algebra, in general, using
Theorem \ref{barikliksharti} we can give baric property diagram. Let us do this for the above
given chains $E_{i}^{[s,t]},$ $i=0,\ldots,6.$

\begin{theorem}\label{baricprop3-dim}
\begin{itemize}
\item[(i)] (There is no non-baric property transition) The algebras $E_{i}^{[s,t]},$ $i=0,3$
are not baric for any time $(s,t)\in \mathcal T;$
\item[(ii)] (There is baric property transition) The CEAs $E_{i}^{[s,t]},$ $i=1,2,4,5$ have
baric property transition with baric property duration sets as the following\\
$\mathcal T_{b}^{(1)}=\{(s,t)\in\mathcal T: g(s)=f(s)=\pm\frac{1}{h(s)}\}\cup\{(s,t)\in\mathcal T: g(s)=-f(s)=\frac{1}{h(s)}\};$\\
$\mathcal T_{b}^{(2)}=\{(s,t)\in\mathcal T: \varphi(s)=\psi(s)=\pm1\}\cup\{(s,t)\in\mathcal T: \varphi(s)=1,\,\,\,\psi(s)=-1\};$\\
$\mathcal T_{b}^{(4)}=\{(s,t)\in\mathcal T: g(t)\neq0\};$\,\,\,\,\,\,\,\,\,\,\,\,\,\,
 $\mathcal T_{b}^{(5)}=\{(s,t)\in\mathcal T: s\leq t<a\}.$
\end{itemize}
\end{theorem}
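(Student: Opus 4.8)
The whole statement is a case-by-case application of the baric criterion, Theorem \ref{barikliksharti}: for a fixed pair $(s,t)$ the algebra $E_i^{[s,t]}$ is baric precisely when its structural-constants matrix $\mathcal M_i^{[s,t]}=(a_{kl})$ admits a column index $i_0$ whose diagonal entry $a_{i_0i_0}$ is nonzero while every other entry $a_{ki_0}$ ($k\neq i_0$) of that column vanishes, the weight being $\sigma(x)=a_{i_0i_0}x_{i_0}$. My plan is therefore to scan, for each chain, the three columns $i_0\in\{1,2,3\}$ and record the conditions on the parameter functions under which this ``single surviving diagonal entry'' pattern holds; the duration set $\mathcal T_b^{(i)}$ is the union over $i_0$ of these conditions, and non-triviality of the partition $\{\mathcal T_b^{(i)},\mathcal T\setminus\mathcal T_b^{(i)}\}$ decides property transition.

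For part (i) both chains fail the criterion identically. The matrix $\mathcal M_0^{[s,t]}$ is zero, so no column has a nonzero diagonal entry and $E_0^{[s,t]}$ is baric for no $(s,t)$. For $\mathcal M_3^{[s,t]}$ the three rows are proportional: column $l$ equals $\tfrac{g_l(t)}{\Phi(s)}(1,\psi(s),\varphi(s))^T$, so the off-diagonal entries of any column are fixed scalar multiples of its diagonal entry. Columns $2$ and $3$ are excluded at once, since their diagonal entries $a_{22},a_{33}$ carry precisely the factors $g_2(t),g_3(t)$ that would have to vanish in order to kill the top entries $a_{12},a_{13}$; and column $1$ requires $\psi(s)=\varphi(s)=0$, which collapses $\mathcal M_3^{[s,t]}$ to a degenerate one-row matrix excluded by the genericity of the arbitrary functions $\psi,\varphi$. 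Hence $E_3^{[s,t]}$ is not baric for any $(s,t)\in\mathcal T$.

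For part (ii) the four chains split into two structural types. The matrices $\mathcal M_1^{[s,t]}$ and $\mathcal M_2^{[s,t]}$ have three identical columns, so writing their common column as $(A,B,C)^T$ the criterion reduces to requiring that exactly one of $A,B,C$ be nonzero. Solving the three single-nonzero patterns yields linear loci: for $\mathcal M_1^{[s,t]}$, with $(A,B,C)=\tfrac{h(t)}{2}\bigl(\tfrac1{h(s)}+f(s),\,\tfrac1{h(s)}-g(s),\,g(s)-f(s)\bigr)$, killing the lower two rows forces $g(s)=f(s)=\tfrac1{h(s)}$, killing rows $1,3$ forces $g(s)=f(s)=-\tfrac1{h(s)}$, and killing rows $1,2$ forces $g(s)=-f(s)=\tfrac1{h(s)}$, whose union is exactly $\mathcal T_b^{(1)}$. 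The identical bookkeeping on $(A,B,C)=\tfrac12\bigl(1+\psi(s),\,1-\varphi(s),\,\varphi(s)-\psi(s)\bigr)$ produces $\mathcal T_b^{(2)}$, understood inside the region $s\le t<a$ where $\mathcal M_2^{[s,t]}$ is nonzero. For the two lower-triangular chains the nonzero off-diagonal mass already sits in a single row, so only the third column can satisfy the criterion: its diagonal entry is $\tfrac{g(t)}{g(s)}$ in one case and $1$ (for $s\le t<a$) in the other, giving $\mathcal T_b^{(4)}=\{g(t)\neq0\}$ and $\mathcal T_b^{(5)}=\{s\le t<a\}$. In each of the four chains the complement of $\mathcal T_b^{(i)}$ in $\mathcal T$ is nonempty --- take $t\ge a$, or $g(t)=0$, or parameter values off the listed loci --- so each exhibits baric property transition.

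The one genuinely delicate point is the bookkeeping for $\mathcal M_1^{[s,t]}$ and $\mathcal M_2^{[s,t]}$. Because all three columns coincide, I must verify that the three single-nonzero conditions are simultaneously mutually exclusive and jointly exhaustive, and in particular that the compressed notation $g(s)=f(s)=\pm\tfrac1{h(s)}$ correctly merges the two branches coming from columns $1$ and $2$ (which cannot overlap, since $\tfrac1{h(s)}=-\tfrac1{h(s)}$ is impossible), while $g(s)=-f(s)=\tfrac1{h(s)}$ is the separate column-$3$ locus. The secondary subtlety is the implicit domain restriction: for the chains whose matrix vanishes on $t\ge a$ the stated duration set must be intersected with $\{s\le t<a\}$, and for $\mathcal M_3^{[s,t]}$ one must argue that non-baricity is robust under arbitrary $\psi,\varphi$ rather than an artefact of the degenerate choice $\psi\equiv\varphi\equiv0$.
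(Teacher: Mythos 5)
Your proposal is correct and takes essentially the same route as the paper: the paper's own proof consists precisely of restating Theorem \ref{barikliksharti} in column form (baric iff $a_{11}^{[s,t]}\neq0,\ a_{21}^{[s,t]}=a_{31}^{[s,t]}=0$, or the analogous condition for column $2$ or $3$) followed by ``detailed checking of these conditions'' over each $\mathcal M_i^{[s,t]}$, which is exactly the column-by-column scan you carry out. You in fact supply the case analysis the paper leaves implicit --- including the domain restriction $s\le t<a$ for $\mathcal M_2^{[s,t]}$ and the degenerate locus $\psi(s)=\varphi(s)=0$ for $\mathcal M_3^{[s,t]}$, both of which the paper glosses over.
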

\begin{proof}
By Theorem \ref{barikliksharti} a three-dimensional evolution algebra $E^{[s,t]}$ is baric if and only if
$a_{11}^{[s,t]}\neq0, a_{21}^{[s,t]}=a_{31}^{[s,t]}=0$ or $a_{22}^{[s,t]}\neq0, a_{12}^{[s,t]}=a_{32}^{[s,t]}=0$ or
$a_{33}^{[s,t]}\neq0, a_{13}^{[s,t]}=a_{23}^{[s,t]}=0$. By detailed checking of these
conditions over algebras $E_{i}^{[s,t]},$ $i=0,\ldots,5$ we give proof of the Theorem \ref{baricprop3-dim}.
\end{proof}

\section{Absolute nilpotent and idempotent elements transition}

In this section we answer to problem of existence of "uniqueness of absolute nilpotent element" property transition.
\begin{definition}
The element $x$ of an algebra $A$ is called an absolute nilpotent if $x^2=0.$
\end{definition}

Let $E$ be an n-dimensional evolution algebra over the field $\mathbb R$ with matrix of structural constants
$\mathcal M=(a_{ij})$, then for arbitrary $x=\sum_{i}x_{i}e_{i}$ we have
$$x^2=\sum_{j}\left(\sum_{i}a_{ij}x_{i}^2\right)e_{j}.$$
Then the equation $x^2=0$ is given by the following system

\begin{equation}\label{eqnilpotent}
\sum_{i}a_{ij}x_{i}^2=0,\,\,\,\,j=1,\ldots,n.
\end{equation}

In this section we shall solve the system (\ref{eqnilpotent}) for $E_{i}^{[s,t]}$, i=0,\ldots,6.

For a CEA $E_{i}^{[s,t]}$ with matrix $\mathcal M_{i}^{[s,t]}$ denote \\
$\mathcal T_{nil}^{(i)}=\{(s,t)\in\mathcal T: E_{i}^{[s,t]}\,\, \text{has unique absolute nilpotent}\},$
$\mathcal T_{nil}^0=\mathcal T\backslash\mathcal T_{nil}.$

Then we have the following theorem which contain answer to problem of existence
of "uniqueness of absolute nilpotent element" property transition.

\begin{theorem}
 \begin{itemize}
 \item[(1)] There CEAs $E_{i}^{[s,t]},$ $i=0,4,5$ have infinitely many of absolute
nilpotent elements for any time $(s,t)\in\mathcal T;$
 \item[(2)] The CEAs $E_{i}^{[s,t]},$ $i=1,2$ have "uniqueness of absolute nilpotent element"
 property transition with the property duration sets as the following\\
$\mathcal T_{nil}^{(1)}=\{(s,t)\in\mathcal T: h(t)\neq0, f(s)<g(s)<\frac{1}{h(s)}, -f(s)<\frac{1}{h(s)}\};$\\
$\mathcal T_{nil}^{(2)}=\{(s,t)\in\mathcal T: -1<\psi(s)<\varphi(s)<1, t<a\};$\\
$\mathcal T_{nil}^{(3)}=\{(s,t)\in\mathcal T: g_{1}(t), g_{2}(t),g_{3}(t)-\text{the signs are difference}\}.$
\end{itemize}
\end{theorem}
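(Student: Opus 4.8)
The plan is to reduce the entire statement to a question about the nonnegative real solutions of a single homogeneous linear system, one for each matrix $\mathcal M_i^{[s,t]}$. Writing $x=\sum_{i=1}^{3}x_i e_i$ and setting $u_i:=x_i^2\ge 0$, the nilpotency equation (\ref{eqnilpotent}) becomes the linear system $\sum_{i}a_{ij}^{[s,t]}u_i=0$, $j=1,2,3$, i.e. $(\mathcal M_i^{[s,t]})^{T}u=0$ subject to $u\ge 0$. Each nonnegative solution $u$ with $k$ nonzero coordinates yields exactly $2^{k}$ absolute nilpotents $x$ (choosing $x_i=\pm\sqrt{u_i}$), and the cone $\{u\ge 0:(\mathcal M_i^{[s,t]})^{T}u=0\}$ is invariant under nonnegative scaling. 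Hence $E_i^{[s,t]}$ has a \emph{unique} absolute nilpotent element (necessarily $x=0$) if and only if this cone is trivial, and it has infinitely many the moment the cone contains one nonzero vector. Thus every assertion of the theorem is a statement about triviality of that nonnegative kernel, to be decided time-by-time.

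For part (1) the kernel is visibly nontrivial. For $\mathcal M_0^{[s,t]}\equiv 0$ the system is empty, so every $x$ is nilpotent. For $\mathcal M_5^{[s,t]}$ two of the three rows vanish identically, so the system only constrains $u_3$ (through $g(t)u_3=0$, etc.), leaving the whole nonnegative quadrant $\{(u_1,u_2,0):u_1,u_2\ge 0\}$ in the kernel; the same degenerate pattern of vanishing rows occurs for $\mathcal M_4^{[s,t]}$, so in both cases there are infinitely many absolute nilpotents at every $(s,t)\in\mathcal T$. This disposes of (1).

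For part (2) the decisive structural feature is that $\mathcal M_1^{[s,t]}$ and $\mathcal M_2^{[s,t]}$ have three identical columns while $\mathcal M_3^{[s,t]}$ has proportional rows; in every case the three equations collapse to a single linear form $\ell(u)=Au_1+Bu_2+Cu_3=0$. A single homogeneous linear equation has only the trivial solution on the nonnegative orthant precisely when $A,B,C$ are nonzero and share one sign: if some coefficient is zero the corresponding axis lies in the kernel, and if two coefficients have opposite signs, say $A>0>B$, then $(\,-B,\,A,\,0\,)$ is a nonzero nonnegative solution. Substituting entries, for $\mathcal M_1^{[s,t]}$ one gets $(A,B,C)=\tfrac12 h(t)\big(\tfrac1{h(s)}+f(s),\ \tfrac1{h(s)}-g(s),\ g(s)-f(s)\big)$ and for $\mathcal M_2^{[s,t]}$ (on $s\le t<a$) the triple $\tfrac12\big(1+\psi(s),\ 1-\varphi(s),\ \varphi(s)-\psi(s)\big)$; requiring all three positive yields exactly $\mathcal T_{nil}^{(1)}$ and $\mathcal T_{nil}^{(2)}$. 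I would use the identities that these coefficients sum to $2/h(s)$ and to $2$ respectively to discard the ``all negative'' branch and to note that on $t\ge a$, where $\mathcal M_2^{[s,t]}=\mathcal M_0^{[s,t]}$, uniqueness fails; this is what produces the transition. For $\mathcal M_3^{[s,t]}$ the common linear form is $u_1+\psi(s)u_2+\varphi(s)u_3$ carrying the scalar $g_j(t)/\Phi(s)$, so uniqueness holds exactly when the $g_j(t)$ are not all zero and $1,\psi(s),\varphi(s)$ are of one sign, giving $\mathcal T_{nil}^{(3)}$.

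The mathematical core is elementary; the real work, and the only place where care is needed, is the sign bookkeeping. The main obstacle is to translate ``$A,B,C$ of a common sign'' faithfully into inequalities among $f,g,h,\varphi,\psi$, and, crucially, to handle the boundary cases where a coefficient vanishes: these must be excluded from the uniqueness set because a vanishing coefficient frees an entire nonnegative axis and hence creates infinitely many nilpotents. For each algebra I would check the two inclusions separately — that every $(s,t)$ inside the claimed set gives a trivial cone, and that every $(s,t)$ outside it gives a nonzero nonnegative solution — and finally exhibit an explicit pair of times on opposite sides of the boundary to certify that the property transition genuinely occurs.
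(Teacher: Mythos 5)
Your reduction --- setting $u_i=x_i^2\ge 0$ so that (\ref{eqnilpotent}) becomes a homogeneous linear system on the nonnegative orthant, with uniqueness of the absolute nilpotent equivalent to triviality of the nonnegative kernel --- is exactly the ``simple analysis'' that the paper's one-sentence proof gestures at, and your part (1) is sound (the paper's numbering jumps from $\mathcal M_3$ to $\mathcal M_5$, so $E_4^{[s,t]}$ and $E_5^{[s,t]}$ are the two algebras whose matrices have two zero rows; your reading of this is the right one). Your general criterion is also correct: a single form $Au_1+Bu_2+Cu_3=0$ has only the trivial nonnegative solution iff $A,B,C$ are nonzero and of one sign.

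The gap is in the step where you force that criterion to coincide with the sets in the statement. For $E_1^{[s,t]}$ you discard the ``all negative'' branch via the identity that the coefficients sum to $2/h(s)$. But $h$ is only assumed nonvanishing, not positive, so this sum can be negative and the all-negative branch is realizable: take $h(s)=-1$, $f(s)=\tfrac12$, $g(s)=0$; then $\tfrac{1}{h(s)}+f(s)=-\tfrac12$, $\tfrac{1}{h(s)}-g(s)=-1$, $g(s)-f(s)=-\tfrac12$ are all negative, so $x=0$ is the unique absolute nilpotent at such a time, yet $(s,t)\notin\mathcal T_{nil}^{(1)}$ because $f(s)<g(s)$ fails. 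Hence your argument, carried out honestly, yields a uniqueness set strictly larger than $\mathcal T_{nil}^{(1)}$; the claimed equality with ``all three positive'' is false without an extra hypothesis $h>0$ (the discard is legitimate only for $E_2^{[s,t]}$, where the sum is $2$). Second, for $E_3^{[s,t]}$ your derived condition --- some $g_j(t)\neq 0$ together with $\psi(s)>0$ and $\varphi(s)>0$ --- is the correct consequence of (\ref{eqnilpotent}) (each of the three equations is the scalar $g_j(t)/\Phi(s)$ times the single form $u_1+\psi(s)u_2+\varphi(s)u_3$), but it is \emph{not} the set $\mathcal T_{nil}^{(3)}$ of the statement, which is a condition on the signs of $g_1(t),g_2(t),g_3(t)$ alone; those signs are irrelevant to the kernel, so the two sets cannot agree. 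Asserting that your condition ``gives $\mathcal T_{nil}^{(3)}$'' papers over a genuine discrepancy (one that in fact exposes an error, or at least a garbled formulation, in the theorem itself). A complete write-up along your lines would have to either state the corrected duration sets or add the hypotheses under which the stated ones are exact.
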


\begin{proof}
The proof consists the simple analysis of the solutions of the system
(\ref{eqnilpotent}) for each $E_{i}^{[s,t]},$ $i=0,\ldots,5$
\end{proof}

An element $x$ of an algebra $A$ is called idempotent if $x^2=x.$ Such points
of an evolution algebra are especially important, because they are the fixed
points (i.e. $V(x)=x$) of the evolution operator $V$. We denote by
$\mathcal Id(E)$ the set of idempotent elements of an algebra $E.$ Since
$x=\sum_{i}x_{i}e_{i}$ then the equation $x^2=x$ can be written as

\begin{equation}\label{eqidempotent}
x_{j}=\sum_{i=1}^n a_{ij}x_{i}^{2},\,\,\,\,\,\,\,\,j=1,\ldots,n.
\end{equation}

The general analysis of the solutions of the system (\ref{eqidempotent}) is very difficult. We
shall solve this problem for the CEAs $E_{i}^{[s,t]},\,\,\, i=0,\ldots,5.$
The following theorem gives the time-dynamics of the idempotent elements
for algebras $E_{i}^{[s,t]},\,\,\, i=0,\ldots,5.$

\begin{theorem}
\item[(1)] The algebra $E_{0}^{[s,t]}$ have unique idempotent $(0,0)$.
\item[(2)]
$\mathcal Id(E_{1}^{[s,t]})=\left\{\begin{array}{ll}
(0,0,0),\,\,\,\text{if}\,\,\, (s,t)\in\{(s,t)\in\mathcal T: h(t)=0\}\\[2mm]
(0,0,0), \left(\frac{h(s)}{h(t)},\frac{h(s)}{h(t)},\frac{h(s)}{h(t)}\right), \,\,\,\text{if}\,\,\, (s,t)\in\{(s,t)\in\mathcal T: h(t)\neq0\}\end{array}\right.$

\item[(3)]
$\mathcal Id(E_{2}^{[s,t]})=\left\{\begin{array}{ll}
(0,0,0),\,\,\,\text{if}\,\,\, (s,t)\in\{(s,t)\in\mathcal T: t\geq a\}\\[2mm]
\left\{(0,0,0), (1,1,1)\right\}, \,\,\,\text{if}\,\,\, (s,t)\in\{(s,t)\in\mathcal T: s\leq t<a\}\end{array}\right.$

\item[(4)]
$\mathcal Id(E_{3}^{[s,t]})=\left\{\begin{array}{ll}
(0,0,0),\,\,\,\text{if}\,\,\, (s,t)\in\{(s,t)\in\mathcal T: F(s,t)=0\}\\[2mm]
\left\{(0,0,0), (\frac{g_{1}(t)}{F(s,t)},\frac{g_{2}(t)}{F(s,t)},\frac{g_{3}(t)}{F(s,t)})\right\}, \,\,\,\text{if}\,\,\, (s,t)\in\{(s,t)\in\mathcal T: F(s,t)\neq0\}\end{array}\right.$\\
 where $F(s,t)=f_{1}(s)(g_{1}(t)+\psi(s)(g_{2}(t))^2+\varphi(s)(g_{3}(t))^2).$

\item[(5)]
$\mathcal Id(E_{4}^{[s,t]})=\left\{\begin{array}{ll}
(0,0,0),\,\,\,\text{if}\,\,\, (s,t)\in\{(s,t)\in\mathcal T: g(t)=0\}\\[2mm]
\left\{(0,0,0), (\frac{g(s)}{g(t)},\frac{g(s)f(t)}{(g(t))^2},\frac{g(s)\varphi(t)}{(g(t))^2})\right\}, \,\,\,\text{if}\,\,\, (s,t)\in\{(s,t)\in\mathcal T: g(t)\neq0\}\end{array}\right.$

\item[(6)]
$\mathcal Id(E_{5}^{[s,t]})=\left\{\begin{array}{ll}
\left\{(0,0,0), (1,\psi(t),\varphi(t))\right\}, \,\,\,\text{if}\,\,\, (s,t)\in\{(s,t)\in\mathcal T: s\leq t<a\},\\[2mm]
(0,0,0),\,\,\,\text{if}\,\,\, (s,t)\in\{(s,t)\in\mathcal T: t\geq a\}.
\end{array}\right.$
\end{theorem}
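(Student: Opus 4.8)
The plan is to solve the idempotency system (\ref{eqidempotent}) directly for each matrix $\mathcal M_i^{[s,t]}$, exploiting the fact that every one of these matrices has a special shape which collapses the three scalar equations to a single quadratic in one unknown. For $E_0^{[s,t]}$ the matrix is zero, so (\ref{eqidempotent}) forces $x_1=x_2=x_3=0$ and the origin is the only idempotent; this settles item (1) immediately.

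For $E_1^{[s,t]}$ and $E_2^{[s,t]}$ I would use that the three columns of $\mathcal M_i^{[s,t]}$ coincide, so the right-hand side $\sum_i a_{ij}^{[s,t]}x_i^2$ is independent of $j$. Hence any idempotent satisfies $x_1=x_2=x_3=:\lambda$, and the whole system reduces to the single equation $\lambda=\sigma\lambda^2$, where $\sigma$ is the common column sum. A short computation gives $\sigma=h(t)/h(s)$ for $E_1^{[s,t]}$ and $\sigma=1$ on $\{s\le t<a\}$ for $E_2^{[s,t]}$, whence $\lambda\in\{0,1/\sigma\}$ when $\sigma\neq0$ and $\lambda=0$ otherwise. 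Reading off the two roots reproduces the branch dichotomy $h(t)=0$ versus $h(t)\neq0$ of item (2) and $t\ge a$ versus $s\le t<a$ of item (3).

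For $E_3^{[s,t]}$ I would instead use the proportional-rows structure $a_{ij}^{[s,t]}=r_i(s)g_j(t)/\Phi(s)$ with $(r_1,r_2,r_3)=(1,\psi(s),\varphi(s))$. Then (\ref{eqidempotent}) becomes $x_j=\frac{g_j(t)}{\Phi(s)}\,S$ with $S=x_1^2+\psi(s)x_2^2+\varphi(s)x_3^2$, so every nonzero idempotent is forced to be proportional to $(g_1(t),g_2(t),g_3(t))$; substituting this back yields the scalar relation $S=\frac{S^2}{\Phi(s)^2}\big(g_1(t)^2+\psi(s)g_2(t)^2+\varphi(s)g_3(t)^2\big)$, whose nonzero root exists precisely when the associated quadratic form $F(s,t)$ is nonzero, giving the stated coordinates $g_j(t)/F(s,t)$. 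The remaining sparse matrices $\mathcal M_5^{[s,t]}$ and $\mathcal M_6^{[s,t]}$ are even easier: only the third coordinate feeds back into the system, so one first solves the single self-referential equation $x_3=a_{33}^{[s,t]}x_3^2$ for $x_3$, and then reads off $x_1,x_2$ by back-substitution into the first two equations; the dichotomy is again governed by whether the diagonal coefficient vanishes ($g(t)\neq0$ for $\mathcal M_5^{[s,t]}$, and $s\le t<a$ for the piecewise $\mathcal M_6^{[s,t]}$).

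The computations themselves are elementary, so the main effort is bookkeeping: correctly matching each matrix to the $E_i^{[s,t]}$ it represents, respecting the coordinate labelling in the stated answers, and splitting every piecewise-defined chain into its two time-regimes (on $t\ge a$ the matrix degenerates to zero and only the origin survives). The one place that demands slight care is the proportional-rows case $E_3^{[s,t]}$: there I must verify that the single surviving value of $S$ really yields a genuine solution of all three original equations and not merely of the reduced scalar equation, and pin down the exact nonvanishing condition $F(s,t)\neq0$ under which the second idempotent appears.
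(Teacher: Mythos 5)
Your plan is correct and follows essentially the same route as the paper: a case-by-case direct solution of system (\ref{eqidempotent}) exploiting each matrix's special shape, with the proportional-rows chain $E_{3}^{[s,t]}$ treated as the substantial case by forcing $(x_{1},x_{2},x_{3})$ proportional to $(g_{1}(t),g_{2}(t),g_{3}(t))$ and then solving a scalar quadratic. The only notable difference is in that case: your substitution via the scalar $S=x_{1}^{2}+\psi(s)x_{2}^{2}+\varphi(s)x_{3}^{2}$ is uniform and avoids the paper's auxiliary assumption $g_{1}(t)\neq0$, and your computation yields $(g_{1}(t))^{2}$ rather than $g_{1}(t)$ inside $F(s,t)$, which is the algebraically correct form (the paper's printed $F$ appears to contain a typo).
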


\begin{proof}
The proof contains detailed analysis of solutions of the system (\ref{eqidempotent})
for each $E_{i}^{[s,t]}$. We shall give here proof of the assertion (4) which is more
substantial.

In this case the system (\ref{eqidempotent}) has the form
\begin{equation}\label{eqidempotent3}
\left\{\begin{array}{lll}
x_{1}=f_{1}(s)g_{1}(t)(x_{1}^2+\psi(s)x_{2}^2+\varphi(s)x_{3}^2),\\[2mm]
x_{2}=f_{1}(s)g_{2}(t)(x_{1}^2+\psi(s)x_{2}^2+\varphi(s)x_{3}^2),\\[2mm]
x_{3}=f_{1}(s)g_{3}(t)(x_{1}^2+\psi(s)x_{2}^2+\varphi(s)x_{3}^2).\end{array}\right.
\end{equation}
It is clear that the system (\ref{eqidempotent})
for $E_{3}^{[s,t]}$ has a solution $(0,0,0)$.

Now we we assume that $g_{1}(t)\neq0,$ then
$f_{1}(s)(x_{1}^2+\psi(s)x_{2}^2+\varphi(s)x_{3}^2)=\frac{x_{1}}{g_{1}(t)}$
and therefore $x_{2}=x_{1}\frac{g_{2}(t)}{g_{1}(t)}, x_{3}=x_{1}\frac{g_{3}(t)}{g_{1}(t)}.$
If we put these values to first equation of (\ref{eqidempotent3}) we get
$x_{1}=\frac{g_{1}(t)}{F(s,t)}$, where $F(s,t)=f_{1}(s)(g_{1}(t)+\psi(s)(g_{2}(t))^2+\varphi(s)(g_{3}(t))^2).$
Consequently, $x_{2}=\frac{g_{2}(t)}{F(s,t)}$ and $x_{3}=\frac{g_{3}(t)}{F(s,t)}.$
Therefore the system (\ref{eqidempotent})
for $E_{3}^{[s,t]}$ has a solution $(\frac{g_{1}(t)}{F(s,t)},\frac{g_{2}(t)}{F(s,t)},\frac{g_{3}(t)}{F(s,t)})$

Proofs of the assertions $(i), i=1,2,4,5,6$ are similar analysis of the
system (\ref{eqidempotent}) for each $E_{i}^{[s,t]}$.
\end{proof}

\section{Conclusion}

In conclusion we note that in the present paper we have constructed chains of three-dimensional evolution
algebras and have studied behavior of the property to be baric for
each chains constructed in the section 3. We showed that some of the chains are
never baric. For other chains which have (baric property
transition) we defined a baric property controller function. In the last section for chains of evolution
algebras we have studied the behavior and dynamics of the set of absolute nilpotent
and idempotent elements depending on the time respectively.


\begin{resetCounters}\end{resetCounters}

\end{document}